\documentclass[12pt, leqno]{amsart}
\usepackage{amsmath,amssymb,latexsym,amsthm,mathrsfs}
\usepackage{url}

\usepackage{calc}


\usepackage{graphicx,color,transparent} 

\newcommand\labN[2][12pt]{\makebox(0,0)[cb]{\raisebox{#1}{\mbox{#2}}}}

\newcommand\labNE[2][12pt]{\makebox(0,0)[lb]{\raisebox{#1}{\makebox[\width + #1][r]{#2}}}}
\newcommand\labE[2][12pt]{\makebox(0,0)[lc]{\makebox[\width + #1][r]{#2}}}

\newcommand\labnhS[2][12pt]{\makebox(0,0)[ct]{\raisebox{-#1}{\mbox{#2}}}}



\theoremstyle{plain} 
\newtheorem*{defn*}{Definition}
\newtheorem*{lem*}{Lemma}
\newtheorem{lem}{Lemma}[section]
%
\newtheorem*{thm*}{Theorem}

\newtheorem*{cor*}{Corollary}
\newtheorem{prop}{Proposition}
\newtheorem*{prop*}{Proposition}
\newtheorem*{claim*}{Claim}

\theoremstyle{definition} 
%
\newtheorem*{rem*}{Remark}
\newtheorem*{rems*}{Remarks}

\newtheorem*{prob*}{Problem}
\newtheorem*{nota*}{Notation}
\newtheorem*{queORs*}{Question(s)}

\newtheorem*{exa*}{Example}


\newcommand{\abs}[1]{\left\vert#1\right\vert}
\newcommand{\inv}{^{-1}}

\newcommand{\too}{\longrightarrow}

\renewcommand{\iff}{\Leftrightarrow}

\newcommand{\iso}{\cong }
\newcommand{\inp}[1]{ \langle #1 \rangle}

\newcommand{\Span}{\operatorname{span}}

\newcommand{\Stab}{\operatorname{Stab}}
\newcommand{\Fix}{\operatorname{Fix}}
\newcommand{\HFix}{\operatorname{HFix}}

\newcommand{\SL}{\mathrm{SL}}
\newcommand{\GL}{\mathrm{GL}}
\newcommand{\PSL}{\mathrm{PSL}}
\newcommand{\PGL}{\mathrm{PGL}}

\newcommand{\sH}{\mathcal{H}}
\newcommand{\sO}{\mathcal{O}}
\newcommand{\sT}{\mathcal{T}}
\newcommand{\Q}{\mathbb{Q}}
\newcommand{\R}{\mathbb{R}}

\begin{document}

\title[Examples of Fuchsian Groups Sitting in $\mathrm{PSL}_2(\mathbb{Q})$]{Many Examples of Non-cocompact Fuchsian Groups Sitting in $\mathrm{PSL}_2(\mathbb{Q})$}
\author{Mark Norfleet}
\address{Department of Mathematics\\University of Texas--Austin}
\email{mnorfleet@math.utexas.edu}
\urladdr{http://www.ma.utexas.edu/users/mnorfleet/}
%
\begin{abstract}
We construct infinitely many noncommensurable non-cocompact Fuchsian groups $\Delta$ of finite covolume sitting in $\mathrm{PSL}_2(\mathbb{Q})$ so that the set of hyperbolic fixed points of $\Delta$ will contain a given finite collection of elements in the boundary of the hyperbolic plane.
\end{abstract}

\maketitle

\section{Introduction}

Let $\Gamma$ be a Fuchsian group, meaning a discrete subgroup of the group of orientation preserving isometries of $H^2$, the hyperbolic plane. A boundary point of $H^2$ fixed by a parabolic element of a Fuchsian group $\Gamma$ is referred to as a \textit{cusp} of $\Gamma$, and a line fixed by a hyperbolic element is referred to as an \textit{axis} with endpoints called hyperbolic fixed points. For an arbitrary Fuchsian group, determining its set of cusps, hyperbolic fixed points, and/or axes is quite a challenge; for some of the literature addressing this type of problem, see \cite{LongReid02} and its references.

Recall that Fuchsian groups $\Gamma_1$ and $\Gamma_2$ are \textit{commensurable} if $\Gamma_1$ has a subgroup of finite index which is conjugate to a subgroup of finite index in $\Gamma_2$. This work has been motivated by the following question(s):

\begin{queORs*}
If $\Gamma_1$ and $\Gamma_2$ are finite covolume Fuchsian groups with the same set of cusps (or same set of axes), when are they commensurable? 
\end{queORs*}

In \cite{LongReid02}, Long and Reid exhibit four examples of mutually noncommensurable subgroups of $\PSL_2(\Q)$, which are not commensurable with the modular group, but each of them have cusp set exactly $\Q \cup \left\{ \infty \right\}$; they call such groups  \textit{pseudomodular}.  It is still unknown whether or not there are infinitely many pseudomodular groups up to commensurability. Any other possible candidate for pseudomodular groups are (non-arithmetic) discrete subgroups $\Delta \leq \PSL_2(\Q)$, since their cusp set is contained in $\Q \cup \left\{ \infty \right\}$.  For Fuchsian groups, a boundary point cannot both be a cusp and a hyperbolic fixed point. Hence arithmetic and pseudomodular groups cannot have rational hyperbolic fixed points. So if one can exhibit a hyperbolic element of $\Delta$ that has rational fixed points, then $\Delta$'s cusp set is properly contained in $\Q \cup \left\{ \infty \right\}$; thus showing $\Delta$ to be neither arithmetic nor pseudomodular. So Long and Reid asked, how to predict when rational hyperbolic fixed points are present. 

Another motivation arises from a question A. Rapinchuk asked. Are there infinitely many commensurability classes of finite covolume Fuchsian groups sitting in $\PSL_2(\Q)$? Vinberg, answering this question in a preprint \cite{VinbergPP}, has introduced a way to produce infinitely many noncommensurable finite covolume Fuchsian groups in $\mathrm{SL}_2(\mathbb{Q})$.  His examples arise as the even subgroup of a group generated by reflections in the sides of quadrilaterals. To establish that he constructed infinitely many groups up to commensurability, Vinberg uses results (from \cite{Vinberg71}) about the least ring of definition for his examples. 

This paper provides a new solution to Rapinchuk's question, which is different from Vinberg's, and the results also address the presence of rational hyperbolic fixed points.  Namely, we will construct Fuchsian groups sitting in $\PSL_2(\Q)$, all of which will possess a given finite set of rational hyperbolic fixed points.  The main result is

\begin{thm*}
Let $Y$ be a finite set of rational boundary points of the hyperbolic plane. Then there are infinitely many noncommensurable finite covolume Fuchsian groups sitting in $\PSL_2(\Q)$, whose set of hyperbolic fixed points contains $Y$.
\end{thm*}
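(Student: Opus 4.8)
The plan is to separate the two demands of the theorem: first to produce, for the prescribed set $Y$, a single non-cocompact finite covolume group in $\PSL_2(\Q)$ realizing every point of $Y$ as a hyperbolic fixed point, and then to build a whole family of such groups that can be told apart by a concrete commensurability invariant. For the invariant I would use the prime support of the multipliers of the hyperbolic elements. A hyperbolic $\gamma \in \PSL_2(\Q)$ has rational fixed points exactly when $\operatorname{tr}(\gamma)^2 - 4$ is a square in $\Q$, in which case its eigenvalues $\lambda, \lambda^{-1}$ lie in $\Q^\times$; set
\[ P(\Gamma) = \{\, p \text{ prime} : v_p(\lambda) \neq 0 \text{ for some eigenvalue } \lambda \text{ of a hyperbolic } \gamma \in \Gamma \,\}. \]
Because conjugation preserves traces and eigenvalues, and because passing to a finite index subgroup only replaces each $\gamma$ by a power $\gamma^m$ (whose eigenvalue $\lambda^m$ has the same prime support as $\lambda$), the finite set $P(\Gamma)$ is an honest commensurability invariant. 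This is in the same spirit as Vinberg's use of the least ring of definition, with the advantage of requiring nothing about the conjugating element.

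For the construction I would work with an explicit fundamental polygon and Poincar\'e's polygon theorem. After normalizing by an element of $\PSL_2(\Q)$, which acts with enough transitivity on $\Q \cup \{\infty\}$ to place a few chosen points of $Y$ conveniently and which preserves all the relevant properties, I would lay out an ideal polygon with all vertices at rational cusps together with a collection of rational side-pairing transformations. The ideal vertices force the quotient to have finite area, so the only real content is arranging the side-pairings in $\PSL_2(\Q)$ so that the cycle and cusp conditions of Poincar\'e's theorem hold (guaranteeing discreteness and that the polygon is a fundamental domain) and so that, for each $r_i \in Y$, some word in the generators is a hyperbolic element whose axis is a compact closed geodesic with $r_i$ as an endpoint rather than a funnel-producing boundary identification. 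The cleanest way to secure the latter is to build $r_i$ in as one endpoint of the axis of a hyperbolic element of $\PSL_2(\Q)$ supported in the interior of the pattern, choosing its other endpoint rational as well. Since everything is rational this automatically makes $r_i$ a hyperbolic fixed point, and since a finite covolume group in $\PSL_2(\Q)$ has only rational cusps, these hyperbolic fixed points are disjoint from the cusp set; thus, as the introduction notes, the group is neither arithmetic nor pseudomodular.

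To obtain infinitely many groups I would introduce an integer parameter into the side-pairings, most simply by composing one of the hyperbolic side-pairings with a rational deformation whose eigenvalue involves the $n$-th prime $p_n$, chosen small enough that the Poincar\'e conditions and the placement of $Y$ survive. This should produce a sequence $\Gamma_n$ of finite covolume subgroups of $\PSL_2(\Q)$, each still containing hyperbolic elements fixing every point of $Y$, but now with $p_n \in P(\Gamma_n)$. Arranging the deformations so that the sets $P(\Gamma_n)$ are pairwise distinct, for instance so that $p_n \notin P(\Gamma_m)$ for $m < n$, then forces the $\Gamma_n$ to be pairwise noncommensurable by the invariance established above.

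I expect the genuine difficulty to be compatibility rather than either ingredient in isolation. Verifying Poincar\'e's hypotheses for a polygon that simultaneously carries the prescribed rational axes and the prime-$p_n$ deformation is the technical heart: one must keep the total angle around each vertex cycle equal to $2\pi/m$ for an integer $m$, keep the cusped vertices genuinely parabolic as the side-pairings are perturbed, and ensure that the deformation truly enlarges $P(\Gamma_n)$ rather than being absorbed into powers of elements already present. Showing that the axis through each $r_i$ remains an interior closed geodesic, so that finite covolume is preserved and $r_i$ stays a hyperbolic fixed point, under the whole family of deformations is where I would expect to spend the most effort, and it is the step most likely to dictate the precise shape of the polygon and the range of admissible deformations.
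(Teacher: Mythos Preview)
Your overall architecture---build one group realizing $Y$, then vary a parameter and separate the results by a commensurability invariant---matches the paper's, but both the construction and the invariant differ, and there is a genuine gap in how you control the latter.

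Your $P(\Gamma)$ is a legitimate conjugation- and finite-index-invariant, and it is indeed finite, though you assert this without proof. The reason is that a finitely generated $\Gamma \leq \PSL_2(\Q)$ lies in $\PSL_2(\Z[1/N])$ for some $N$, so every element sits in $\PSL_2(\Z_p)$ for $p \nmid N$, forcing any rational eigenvalue to be a $p$-adic unit; hence $P(\Gamma)\subseteq\{p:p\mid N\}$. This same observation is exactly what you need, and do not supply, for the hard direction: to conclude $p_n \notin P(\Gamma_m)$ for $m<n$ you must bound $P(\Gamma_m)$ from \emph{above}, i.e.\ certify that \emph{no} hyperbolic element of $\Gamma_m$ has $p_n$ in its eigenvalue support. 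Your proposal only addresses enlarging $P(\Gamma_n)$ from below; without the upper bound the groups cannot be separated. Once you observe that the upper bound comes for free from the denominators of the generating matrices, your invariant becomes essentially ``the set of primes $p$ at which $\Gamma$ fails to fix a vertex of Serre's tree $\sT_p$,'' which is precisely what the paper uses. The paper works with the tree directly: it gives a valuation criterion on the conjugators of the half-turn generators that forces $\Gamma$ not to fix any vertex of $\sT_p$ for a chosen $p\equiv 3\pmod 4$, and observes separately that any fixed such $\Gamma$ does fix a vertex of $\sT_q$ for all sufficiently large $q$.

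On the construction, the paper is much more concrete and of a different shape. It does not use hyperbolic side-pairings on an ideal polygon; instead $\Gamma$ is generated by $n{+}1$ half-turns $\rho_0,\dots,\rho_n$ about points $f_i$ on the sides of an ideal $(n{+}1)$-gon, giving signature $(0;2,\dots,2;1;0)$. The hyperbolic element fixing the prescribed $y_i$ is $\rho_i\rho_{i+1}$, since $f_i$ and $f_{i+1}$ are chosen on a common geodesic through $y_i$. Rationality comes from taking all $f_i\in\Q^{2,1}$, and the tree invariant is controlled by placing $\abs{\inp{f_1,f_1}}$ in the square class $p(\Q^\times)^2$ while keeping the other $\abs{\inp{f_i,f_i}}$ out of it. Your sketch leaves both the polygon and the ``deformation whose eigenvalue involves $p_n$'' unspecified; arranging a hyperbolic side-pairing in $\PSL_2(\Q)$ with prescribed eigenvalue prime support while preserving all Poincar\'e cycle and cusp conditions and the rationality of the remaining pairings is exactly the compatibility problem you flag at the end, and it is not clear it can be solved as stated. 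The half-turn construction avoids this because order-$2$ elliptics in $\PGL_2(\Q)$ are parametrized by their rational fixed vectors $f\in T_\Q^+$, there are no angle conditions at the $f_i$, and the only cycle condition is a single parabolicity constraint absorbed in the final step.
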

 
This will be proved in section \ref{proofT}. As a brief outline, let $Y$ be a finite number of boundary points of the hyperbolic plane.  We construct (with considerable freedom) examples of Fuchsian groups $\Gamma$ of signature $(0:2,\dots,2 ;1;0)$ such that the set of hyperbolic fixed points of $\Gamma$ contains $Y$ (see section \ref{constG}); furthermore, when $Y$ is a set of rational boundary points, and by restricting some of the freedom in the construction, one can guarantee $\Gamma \leq \PGL_2(\Q)$.  Then we address when the constructed groups are mutually noncommensurable, which relies on analyzing how they act on different trees (see section \ref{SLtrees}). Specifically, we consider fixed points of the action of $\Gamma$ on Serre's trees of $\SL_2(\Q_p)$ for primes $p \equiv  3 \mod 4$ (see Proposition \ref{noLstab}). This perspective allows us to construct an infinite family of mutually noncommensurable groups in $\PSL_2(\Q)$.

The author is grateful to the University of Texas at Austin for their support during the Summer and Fall of 2012.  I am indebted to my advisor Daniel Allcock for all the conversations we had during my work on this project and in particular for his careful reading of this manuscript. I would also like to thank Alan Reid for helpful discussions about pseudomodular groups.

\section{The Construction}\label{constG}

In this section, we will construct a fundamental domain for a Fuchsian group $\Gamma$ of signature $(0:2,\dots,2 ;1;0)$ so that the set of hyperbolic fixed points of $\Gamma$, notated by $\HFix(\Gamma)$, will contain a given finite set $Y$ of elements in boundary of the hyperbolic plane. Before the construction begins, we introduce some notation and a lemma.

Let $H^2$ be the hyperbolic plane. We will write $\overline{xy}$ for the closure of the geodesic line with end points $x,y$ in the closure of $H^2$. The isometry denoted by $\rho_f $ is rotation by $\pi$ with fixed point $f \in H^2$. Let $v_0$ and $v_\infty$ be distinct elements in $\partial H^2$; then we have an order $\leq$ on $\partial H^2 \setminus \left\{ v_\infty \right\}$ (i.e. the order on the real line in the upper half plane model).  

\begin{lem}\label{chooselem}
Let $ x < v < y < u$ in $\partial H^2 \setminus \left\{ v_\infty \right\}$. For each $f$ in the interior of $\overline{cy}$, where $c = \overline{vu} \cap \overline{xy}$, one constructs $t = \rho_f (u)$ and $w = \rho_f(v)$. Then $ x < v < t < y < w < u$ in $\partial H^2$, $f = \overline{xy} \cap \overline{tu}$, and $f \in \overline{vw}$.
\end{lem}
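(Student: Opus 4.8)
The plan is to read off all three assertions from the elementary geometry of the half-turn $\rho_f$. First I would record the properties of $\rho_f$ that I intend to use. Since rotation by $\pi$ about the interior point $f$ is an orientation-preserving isometry of $H^2$ of order $2$ whose only fixed point is $f$, it acts on $\partial H^2$ as an orientation-preserving, fixed-point-free involution. The key fact is that for \emph{every} boundary point $p$ the geodesic $\overline{p\,\rho_f(p)}$ passes through $f$: indeed $\rho_f$ carries this geodesic to $\overline{\rho_f(p)\,p}$, i.e. to itself with its two ends reversed, so $\rho_f$ restricts to a point-reflection of that line, whose unique fixed point on the line is a fixed point of $\rho_f$ in $H^2$ and hence equals $f$. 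Applying this with $p=x$, and using $f\in\overline{cy}\subset\overline{xy}$, the line $\overline{xy}$ is $\rho_f$-invariant with its ends exchanged, so $\rho_f(x)=y$ and $\rho_f(y)=x$.

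Granting these facts, the statements $f=\overline{xy}\cap\overline{tu}$ and $f\in\overline{vw}$ are almost immediate. Applying the key fact to $p=u$ and to $p=v$ gives $f\in\overline{u\,\rho_f(u)}=\overline{tu}$ and $f\in\overline{v\,\rho_f(v)}=\overline{vw}$, which is precisely the third assertion together with the containment $f\in\overline{xy}\cap\overline{tu}$. Because $u\notin\{x,y\}$, the geodesics $\overline{tu}$ and $\overline{xy}$ are distinct and so meet in at most one point; as both contain $f$, that intersection is exactly $\{f\}$, giving the second assertion.

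The substantive part is the ordering $x<v<t<y<w<u$, and here I would argue by monotonicity of a crossing-point map rather than by chasing cyclic orders on $\partial H^2$. Fix the geodesic $L=\overline{xy}$. For a boundary point $s$ in the open arc $(x,y)$ the geodesic $\overline{su}$ crosses $L$ (since $s$ and $u$ separate $x$ and $y$), and as $s$ increases from $x$ to $y$ the crossing point $\overline{su}\cap L$ moves monotonically along $L$ from the end $x$ to the end $y$; at $s=v$ this crossing point is exactly $c$. Now $t$ is characterized by $\overline{tu}\cap L=f$, and $f$ lies strictly between $c$ and $y$ on $L$, so monotonicity forces $v<t<y$. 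The analogous pencil of geodesics through $v$, with $s$ running over the arc from $y$ through $u$ to $x$, sends $s=u$ to the crossing point $c$ and $s\to y$ to the end $y$; since $\overline{vw}\cap L=f$ again lies between $c$ and $y$, the same monotonicity gives $y<w<u$. Combining these with $x<v$ yields the full chain. As a sanity check, the degenerate choice $f=c$ gives $\rho_c(u)=v$ and $\rho_c(v)=u$, i.e. $t=v$ and $w=u$, so pushing $f$ into the interior of $\overline{cy}$ is exactly what makes the inequalities strict.

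The main obstacle I anticipate is making the monotonicity of the crossing-point map rigorous and correctly oriented: one must check that the geodesics emanating from a fixed boundary point foliate $H^2$ and meet the transversal $L$ in the same order as their second endpoints, and keep careful track of which arc each of $t$ and $w$ lands in so that the inequalities point the right way. This bookkeeping is cleanest after normalizing by an isometry (for instance so that $v_\infty=\infty$, or so that $L$ is the imaginary axis); everything else in the argument is formal.
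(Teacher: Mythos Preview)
Your argument is correct: the half-turn $\rho_f$ sends every boundary point $p$ to the second endpoint of the geodesic through $f$ and $p$, which immediately gives $f\in\overline{tu}$ and $f\in\overline{vw}$, and your monotonicity of the crossing-point map along $L=\overline{xy}$ (using that the geodesics through a fixed ideal point foliate $H^2$ and hence meet $L$ injectively, with the correct limiting behavior at the ends) pins down $v<t<y$ and $y<w<u$ exactly as you say. The paper itself omits the proof of this lemma and relies on the accompanying figure, so your write-up supplies precisely the elementary verification the paper leaves to the reader; the only care needed is the orientation bookkeeping you already flag, and that is handled cleanly by your observation that $g(v)=c$, $g(t)=f$, with $f$ strictly between $c$ and the $y$-end of $L$.
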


We omit the proof, but include a figure to help illustrate the lemma.

\begin{figure}[h]
\setlength{\unitlength}{320pt}
\begin{picture}(1,0.35125957)%
    \put(0,0){\includegraphics[width=\unitlength]{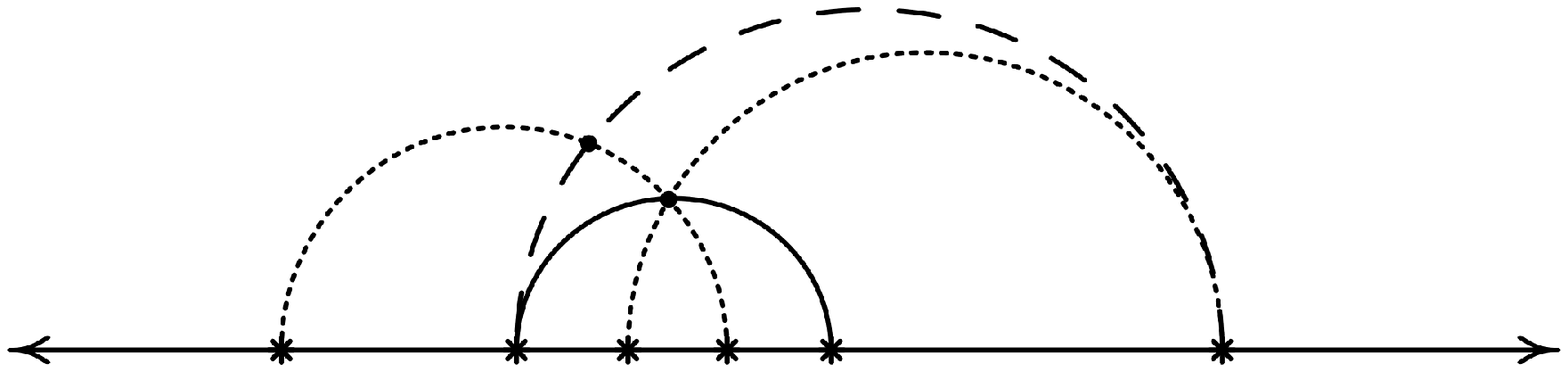}}%
    \put(0.35734077,0.08671665){\labnhS[14pt]{$v$}}
    \put(0.5335716,0.08671659){\labnhS[14pt]{$w$}}
    \put(0.47506919,0.08671662){\labnhS[14pt]{$y$}}
    \put(0.22576178,0.08671661){\labnhS[14pt]{$x$}}
    \put(0.41966765,0.08671663){\labnhS[14pt]{$t$}}
    \put(0.75207754,0.08671659){\labnhS[14pt]{$u$}}
    \put(0.39776601,0.20206234){\labN[6pt]{\makebox[\width + 6pt][l]{$c$}}}
    \put(0.44246167,0.17077751){\labN[0pt]{\makebox[\width + 36pt][r]{$f$}}}
\end{picture}%
\caption{Lemma \ref{chooselem} drawn in upper half plane model.}
\end{figure}

\subsection{The Construction of a Fundamental Domain for $\Gamma$}\label{steps}

As an overview of the construction (Figure 2 illustrates an example), we start with a finite set of points in the boundary of the hyperbolic plane (the set $Y=\left\{ y_i \right\}$). We use Lemma \ref{chooselem} to sequentially construct the vertices and edges (the solid lines in Figure 2) of an ideal convex polygon in $H^2$.  We then show that the ideal convex polygon constructed is a fundamental domain for a discrete group $\Gamma$ generated by isometries that are rotations by $\pi$. Furthermore, $\Gamma$ is guaranteed to possess a set of hyperbolic elements (the dash lines are their axes in Figure 2) whose fixed points contain the initially given set of boundary points.       

Below we will have the notational convention: $f_i \in H^2$, $\rho_i= \rho_{f_i}$ is rotation by $\pi$ with fixed point $f_i$, and $y_i, x_i, v_i \in \partial H^2$. As a slight variation when considering the hyperboloid model of the hyperbolic plane (see subsection \ref{V21U2models}), $f_i, x_i, y_i, v_i$ will be vectors in Minkowski space $\R^{2,1}$.

We begin the construction; let $Y$ be a finite set of $n-1$ points in the boundary of the hyperbolic plane, and let $Y= \left\{ y_i \right\}$ so that $v_0 < y_1 < \dots < y_{n-1} \ne v_\infty$ in the boundary of the hyperbolic plane.

\vspace{12pt}

\noindent\hfill\parbox[c]{.95\textwidth}{\textbf{$1^{\text{st}}$ Step:} Choose $x_1$ such that $v_0 < x_1 < y_1$ in $\partial H^2$, and then choose $f_1 \in \overline{x_1 y_1}$.  Define $v_1 = \rho_{1}(v_0)$; note $v_0 < x_1 < v_1 < y_1$ in $\partial H^2$, and $f_1 \in \overline{v_0 v_1}$.}

\vspace{12pt}

\noindent\hfill\begin{minipage}[c]{\textwidth}
\noindent When $n>2$, let $i \in \left\{ 2, \dots ,n-1 \right\}$.

\vspace{6pt}

\noindent\hfill\parbox[c]{.95\textwidth}{\textbf{$i^{\text{th}}$ Step:} Let $x_{i-1} < v_{i-1} < y_{i-1}< y_{i}$ in $\partial H^2$. By Lemma~\ref{chooselem}, one can choose a $f_i \in \overline{x_{i-1}  y_{i-1}}$ and construct $x_{i}=\rho_{i}(y_i)$ and $v_i= \rho_{i}(v_{i-1})$, so that $v_{i-1} < x_{i} < y_{i-1} < v_{i} < y_{i} $ and $f_i \in \overline{v_{i-1} v_i}$.}
\end{minipage}

\vspace{12pt}

\noindent\hfill\parbox[c]{.95\textwidth}{\textbf{$n^{\text{th}}$ Step:} Let $x_{n-1} < v_{n-1} < y_{n-1} \ne v_\infty$ in $\partial H^2$. Now construct $f_{n} = \overline{x_{n-1} y_{n-1}} \cap \overline{v_{n-1} v_\infty}$, and define $v_{n} = \rho_{n}(v_{n-1})$.}

\vspace{12pt}

\noindent\hfill\parbox[c]{.95\textwidth}{\textbf{Last Step:} Given $\rho_{n}  \cdots  \rho_{1} (v_0) = v_n$, construct $f_0 \in \overline{v_0 v_n}$ and $\rho_{0}$ so that $\rho_{n}  \cdots  \rho_{1} \rho_0 $ is parabolic fixing $v_n$.}

\vspace{12pt}

\begin{rem*}
When this construction is done with vectors in Minkowski space $\R^{2,1}$, $v_n$ and $v_\infty$ are linearly dependent light-like vectors, and in the last step one can see for $f_0 \in \Span\left\{ v_n + v_0  \right\}$, the element $\rho_{0}$ as a Lorentz transformation maps $v_n$ to $v_0$ (as vectors in $\R^{2,1}$), which shows $\rho_{n}  \cdots  \rho_{1} \rho_0 $ is parabolic fixing $v_n$. 
\end{rem*}

\begin{figure}[h]\label{FDGamma}
\setlength{\unitlength}{320pt}
  \begin{picture}(1,0.51687422)%
    \put(0,0){\includegraphics[width=\unitlength]{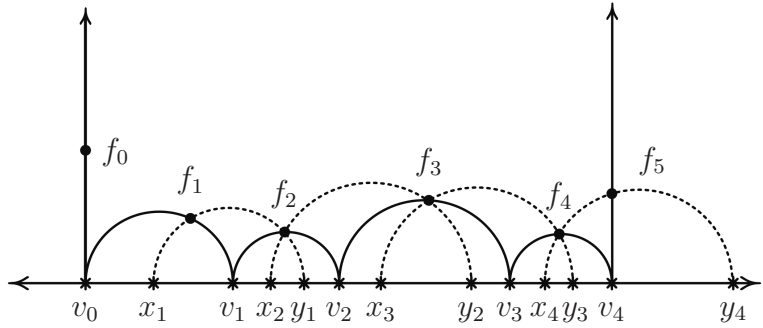}}%
    \put(0.14720675,0.12782516){\labnhS{$v_0$}}
    \put(0.32135845,0.12782516){\labnhS{%
$v_1$%
}}
    \put(0.44700894,0.12782537){\labnhS{%
$v_2$%
}}
    \put(0.64865714,0.12782533){\labnhS{%
$v_3$%
}}
    \put(0.76928582,0.12782507){\labnhS{%
$v_4$%
}}
    \put(0.76928584,0.23365269){\labNE[10pt]{%
$f_5 $%
}}
    \put(0.70673114,0.18585148){\labN{%
$f_4 $%
}}
    \put(0.5529505,0.22573806){\labN{%
$f_3 $%
}}
    \put(0.38184689,0.18825234){\labN{%
$f_2 $%
}}
    \put(0.27118875,0.20435612){\labN{%
$f_1 $%
}}
    \put(0.14720677,0.28485572){\labE[6pt]{%
$f_0 $%
}}
    \put(0.227606,0.12782513){\labnhS{%
$x_1$%
}}
    \put(0.36598355,0.12782486){\labnhS{%
$x_2$%
}}
    \put(0.49626757,0.12782462){\labnhS{%
$x_3$%
}}
    \put(0.69021535,0.12782449){\labnhS{%
$x_4$%
}}
    \put(0.40552013,0.12782512){\labnhS{%
$y_1$%
}}
    \put(0.60320266,0.12782538){\labnhS{%
$y_2$%
}}
    \put(0.7231411,0.12782458){\labnhS{%
$\, y_3$%
}}
    \put(0.91299813,0.12851216){\labnhS{%
$y_4$%
}}
  \end{picture}%
\caption{The solid lines bound a fundamental domain for $\Gamma$ in the upper half plane model with $Y=\left\{ y_1, \dots , y_4 \right\}$.}
\end{figure}

We have an ideal $n+1$ sided convex polygon, $P$, with vertices $\left\{ v_0, v_1, \dots, v_{n-1}, v_n=v_\infty \right\}$; furthermore, $f_i$ is on the edge $\overline{v_{i-1} v_i}$, and $f_0$ is on the edge $\overline{v_n v_0}$ (see Figure 2).  Since $\rho_i$ is rotation by $\pi$ with fixed point $f_i \in H^2$, $\rho_i$ maps $\overline{v_{i-1} v_i}$ to itself (likewise, $\rho_0$ maps $\overline{v_n v_0}$ to itself); that is, $\rho_i$ maps the directed edge $\overline{ f_i  v_{i-1}}$ to the directed edge $\overline{ f_i  v_i }$, and $\rho_0$ maps the directed edge $\overline{ f_0  v_{0}}$ to the directed edge $\overline{ f_0  v_n }$. By Poincar\'e's Theorem (see section \S9.8 in \cite{Beardon83}), the group $\Gamma$ generated by $\left\{  \rho_{1}, \dots ,  \rho_{n}, \rho_{0} \right\}$ is discrete, and $P$ is a fundamental domain for $\Gamma$. In the last step, we made $\rho_{n}  \dots  \rho_{1} \rho_0 $ parabolic fixing $v_n$; thus $H^2/\Gamma$ is a complete finite area once punctured $2$-sphere with $n+1$ cone points of order $2$.

For $1\leq i < n$, the element $\rho_i \rho_{i+1}$ is hyperbolic with axis $\overline{x_i y_i}$, since $f_{i}$ and $f_{i+1}$ both lie on the geodesic line $\overline{x_i y_i}$ (by construction); therefore, $y_i$ is a hyperbolic fixed point for $\rho_i \rho_{i+1} \in \Gamma$.  

By Lemma~\ref{chooselem}, one sees that there are infinitely many choices for each $f_i$ (for $1 \leq i < n$), producing infinitely many such Fuchsian groups $\Gamma$; establishing the following:

\begin{prop}
Let $Y$ be a finite set of $n-1$ points in $\partial H^2$. Then there are infinitely many Fuchsian groups $\Gamma$ of finite covolume of signature $(0: \underbrace{2,\dots,2}_{n+1}\; ;1;0)$ such that $Y \subset \HFix(\Gamma)$.   
\end{prop}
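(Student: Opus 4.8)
The plan is to obtain each group $\Gamma$ as the output of the construction of \S\ref{steps}, so that the proof consists of verifying that this construction does what is claimed: that it produces a bona fide fundamental domain, that the resulting group has the asserted signature, that it fixes each point of $Y$ hyperbolically, and that the freedom in the construction yields infinitely many distinct groups. First I would run the construction for the given $Y=\{y_1,\dots,y_{n-1}\}$. At each stage Lemma~\ref{chooselem} guarantees that the newly produced boundary points fall in the stated cyclic order, so the vertices satisfy $v_0<v_1<\dots<v_{n-1}<v_n=v_\infty$ and bound a convex ideal $(n+1)$-gon $P$, with each fixed point $f_i$ interior to the edge $\overline{v_{i-1}v_i}$ and $f_0$ interior to $\overline{v_n v_0}$. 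For the last step I would invoke the Remark: passing to the hyperboloid model and taking $f_0\in\Span\{v_n+v_0\}$ forces $\rho_0$ to interchange the light-like vectors $v_n$ and $v_0$, so that $\rho_n\cdots\rho_1\rho_0$ fixes $v_n$ and is parabolic.

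With $P$ in hand I would apply Poincar\'e's polygon theorem (\cite{Beardon83}, \S9.8). Here every side-pairing is of fold type: the $\pi$-rotation $\rho_i$ carries the edge $\overline{v_{i-1}v_i}$ to itself by swapping the half-edges $\overline{f_i v_{i-1}}$ and $\overline{f_i v_i}$, which is exactly the pairing that manufactures a cone point of order $2$ at the image of $f_i$. The hypotheses then reduce to the cycle conditions, and the only cycles to check are the $n+1$ half-turn folds---each contributing cone angle $\pi$, hence an order-$2$ point---and the single vertex cycle $v_0\to v_1\to\dots\to v_n\to v_0$, whose cycle transformation $\rho_n\cdots\rho_1\rho_0$ was arranged to be parabolic, hence a cusp. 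Poincar\'e's theorem then gives that $\Gamma=\langle\rho_0,\dots,\rho_n\rangle$ is discrete with fundamental domain $P$; since $P$ is an ideal polygon it has finite area, and the quotient is a once-punctured sphere with $n+1$ cone points of order $2$, i.e.\ $\Gamma$ has finite covolume and signature $(0:\underbrace{2,\dots,2}_{n+1}\;;1;0)$.

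To see $Y\subset\HFix(\Gamma)$, fix $1\le i<n$ and consider $\rho_i\rho_{i+1}$. Applying the conclusion $f=\overline{xy}\cap\overline{tu}$ of Lemma~\ref{chooselem} at steps $i$ and $i+1$ shows that both $f_i$ and $f_{i+1}$ lie on the geodesic $\overline{x_i y_i}$; since a composition of two half-turns about distinct points of a common geodesic is a hyperbolic translation along it, $\rho_i\rho_{i+1}$ is hyperbolic with axis $\overline{x_i y_i}$, so $y_i\in\HFix(\Gamma)$. Letting $i$ range gives $Y\subset\HFix(\Gamma)$. Finally, Lemma~\ref{chooselem} permits a continuum of choices for each $f_i$ ($1\le i<n$); varying them moves the second endpoints $x_i=\rho_i(y_i)$ of these axes, so the groups are genuinely distinct, yielding infinitely many such $\Gamma$. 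I expect the main obstacle to be the careful verification of the Poincar\'e hypotheses for the fold-type pairings---confirming that the cycle and completeness conditions hold and that the vertex cycle is a single parabolic cycle producing exactly one cusp---together with justifying the last-step choice of $f_0$; the remainder is an orderly assembly of facts already recorded in the construction.
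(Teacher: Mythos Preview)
Your proposal is correct and follows essentially the same route as the paper: run the construction of \S\ref{steps}, invoke Poincar\'e's theorem on the resulting ideal $(n+1)$-gon with fold pairings to get discreteness and the signature, observe that $f_i,f_{i+1}\in\overline{x_i y_i}$ makes $\rho_i\rho_{i+1}$ hyperbolic with axis $\overline{x_i y_i}$, and use the freedom in choosing the $f_i$ to obtain infinitely many groups. The only minor imprecision is your distinctness argument---varying $f_1$ along $\overline{x_1 y_1}$ does not move $x_1=\rho_1(y_1)$ (a $\pi$-rotation about any point of a geodesic swaps its endpoints)---but the groups are still distinct since $v_1=\rho_1(v_0)$ moves, and in any case the paper itself simply asserts this without elaboration.
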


\subsection{$\Gamma$ in $\mathrm{O}^+(2,1)$ and $\PGL_2(\R)$.} \label{V21U2models}

Let $V$ be a dimension $3$ real vector space with a nondegenerate quadratic form $ \inp{\bullet,\bullet}$ of signature $(2,1)$. Choose a basis $(e_1, e_2 , e_0)$ with $\inp{e_i,e_j}=0$ if $i\ne j$, $\inp{e_i,e_i}=1$ if $i\geq1$, and $\inp{e_0,e_0}=-1$. Such a basis is called a Lorentz orthonormal basis, and $V$ is denoted as $\R^{2,1}$ when such a basis is fixed; $\R^{2,1}$ is called Minkowski space.  We will notate $L=\left\{ v: \inp{v,v}=0 \right\}$ (the set of light-like vectors) and $T=\left\{ v: \inp{v,v}<0 \right\}$ (the set of time-like vectors).  Let $L^+$ and $T^+ $ be the sets of vectors with positive $e_0$-coordinate in $L$ and $T$, respectively.   We let $\mathrm{O}^+(2,1)$ be the group of linear transformations of $V$ that preserve the quadratic form and upper sheet of the hyperboloid $\sH = \left\{ v : \inp{v,v}=-1 \right\} \cap T^+$. With a Lorentz orthonormal basis $\left\{ e_1, e_2, e_0   \right\}$, let the $\Q$-linear combination of $\left\{ e_1, e_2, e_0 \right\}$ be denoted by $\Q^{2,1}$; furthermore, let $L_\Q^+ = L^+ \cap \Q^{2,1}$ and $T_\Q^+ = T^+ \cap \Q^{2,1}$.

For every $v \in T^+$, there is a $2$ by $2$ real symmetric matrix whose determinant is $-\inp{v,v}$; namely,
\[
v \mapsto \left( \begin{matrix}
\inp{v,e_2 + e_0} & \inp{v,e_1} \\
\inp{v,e_1} & \inp{v, -e_2 + e_0}
\end{matrix} \right).
\]
We have that $\GL_2(\R)$ acts on $2$ by $2$ real symmetric matrices by similarity; that is, $\Sigma \mapsto M^{\text{t}} \Sigma M$, where $M \in \GL_2(\R)$ and $\Sigma$ is a $2$ by $2$ real symmetric matrix. We use this to relate the hyperboloid and upper half plane models.

\begin{exa*}
Consider the isometry $\rho_f$, rotation by $\pi$ with fixed point $f \in H^2$. 

In the hyperboloid model, $\rho_f$ corresponds to an element in $\mathrm{O}^+(2,1)$; let $f \in T^+$ be fixed by $\rho_f \in \mathrm{O}^+(2,1)$. Let $\Sigma_f$ be the $2$ by $2$ real symmetric matrix associated to~$f$,
\[
\Sigma_f = 
\left( \begin{matrix}
\inp{f,e_2 + e_0} & \inp{f,e_1} \\
\inp{f,e_1} & \inp{f, -e_2 + e_0}
\end{matrix} \right).
\]
In the upper half plane model of $H^2$, say $a+b i$ is fixed by $\rho_f$ as a matrix in $\SL_2(\R)$; that is, 
\[
\rho_f = 
\left( \begin{matrix}
b & a \\
0 & 1
\end{matrix} \right)
\left( \begin{matrix}
0 & -1 \\
1 & 0
\end{matrix} \right)
\left( \begin{matrix}
b & a \\
0 & 1
\end{matrix} \right) \inv =
\frac{1}{b} 
\left( \begin{matrix}
a & -(b^2 + a^2)\\
1 & -a
\end{matrix} \right). \tag{$*$}
\]
We have that $\rho_f$, as the matrix from ($*$), acts by similarity on $2$ by $2$ real symmetric matrices and fixes $\Sigma_f$ when 
\[
b = \frac{\sqrt{\abs{\inp{f,f}}}}{ \inp{ f , e_2 + e_0}} \quad \text{ and } \quad a = - \frac{ \inp{f , e_1}}{ \inp{f, e_2+e_0} }.
\]
Note: in $\PGL_2(\R)$, $\rho_f$ can be represented by a matrix with entries in $\Q$ when $\inp{f,f}$, $\inp{f , e_i}$ are all in $\Q$. 
\end{exa*}

\begin{prop}\label{DinpslQ}
Let $v_0 < y_1 < \cdots < y_{n-1} \ne v_\infty$ in $L^+_\Q $. Then there are infinitely many non-cocompact Fuchsian groups $\Delta$ of finite covolume sitting in $\PSL_2(\Q)$ such that $\left\{ y_1, \dots, y_{n-1} \right\} \subset \HFix(\Delta)$.
\end{prop}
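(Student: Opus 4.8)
The plan is to rerun the construction of subsection~\ref{steps} inside the hyperboloid model $\R^{2,1}$, keeping every point that appears rational, so that each generating rotation $\rho_i$ lands in $\PGL_2(\Q)$ by the Example, and then to cut down to the finite-index subgroup $\Delta := \Gamma \cap \PSL_2(\Q)$. The given data $v_0 < y_1 < \cdots < y_{n-1} \ne v_\infty$ already lie in $L^+_\Q$, and the freedom in the construction is exactly the choice of the fixed points $f_i$; I will make each of these a rational time-like vector.

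First I would check that rationality propagates through the steps. At the $i$-th step $f_i$ is chosen on the geodesic $\overline{x_{i-1} y_{i-1}}$, i.e.\ in the cone $\{ s\, x_{i-1} + t\, y_{i-1} : s,t > 0 \}$; taking $s,t \in \Q_{>0}$ gives $f_i \in T^+_\Q$, and since a rational $f_i$ makes $\rho_i$ a rational Lorentz transformation (Example), the images $x_i = \rho_i(y_i)$ and $v_i = \rho_i(v_{i-1})$ stay in $\Q^{2,1}$. In the $n$-th step $f_n$ is the point $\overline{x_{n-1} y_{n-1}} \cap \overline{v_{n-1} v_\infty}$, which is the one-dimensional intersection of two rational $2$-planes in $\R^{2,1}$ and is therefore rational; in the last step $f_0 \in \Span\{ v_n + v_0 \}$ is rational because $v_n, v_0 \in L^+_\Q$. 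Hence every $\rho_i$ is rational and $\Gamma = \gen{\rho_0,\rho_1,\dots,\rho_n} \leq \PGL_2(\Q)$.

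The step I expect to be the real obstacle is bridging $\PGL_2(\Q)$ and $\PSL_2(\Q)$: the Example only supplies a rational representative, whose determinant (the quantity $b^2 = -\inp{f,f}/\inp{f,e_2+e_0}^2$) need not be a square in $\Q^\times$, so a priori $\Gamma$ sits in $\PGL_2(\Q)$ but not $\PSL_2(\Q)$. To handle this I would use the homomorphism $\phi \colon \Gamma \to \Q^\times/(\Q^\times)^2$ sending an element to the square class of the determinant of any rational representative; its image is a finitely generated $2$-torsion group, hence finite, so $\Delta := \ker\phi = \Gamma \cap \PSL_2(\Q)$ has finite index in $\Gamma$. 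Being finite index in the finite-covolume non-cocompact group $\Gamma$ of the previous Proposition, $\Delta$ is itself a finite-covolume non-cocompact Fuchsian group in $\PSL_2(\Q)$ (the square of the cusp's parabolic lies in $\Delta$). Finally, although $\rho_i \rho_{i+1}$ itself may fail to lie in $\Delta$, its square $(\rho_i\rho_{i+1})^2$ has trivial image under $\phi$ and is still hyperbolic with axis $\overline{x_i y_i}$; thus $(\rho_i \rho_{i+1})^2 \in \Delta$ and $y_i \in \HFix(\Delta)$ for every $i$.

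For the infinitude, I would note that at each interior step there are infinitely many admissible rational $f_i$: the positive-rational ratios $s:t$ are dense in the relevant subinterval of the geodesic $\overline{x_{i-1} y_{i-1}}$, so the hypotheses of Lemma~\ref{chooselem} can be met by infinitely many rational choices. Exactly as in the previous Proposition these produce infinitely many groups $\Gamma$, and correspondingly infinitely many $\Delta \leq \PSL_2(\Q)$ with $\{y_1,\dots,y_{n-1}\} \subset \HFix(\Delta)$.
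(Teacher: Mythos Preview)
Your proposal is correct and follows essentially the same route as the paper: make all the choices in the construction rational so that $\Gamma \leq \PGL_2(\Q)$, then pass to the kernel $\Delta$ of $\Gamma \to \PGL_2(\Q)/\PSL_2(\Q)$. You supply more detail than the paper does---in particular, you justify why $\Delta$ has finite index (the image in $\Q^\times/(\Q^\times)^2$ is finitely generated $2$-torsion, hence finite) and why $y_i \in \HFix(\Delta)$ (via $(\rho_i\rho_{i+1})^2$)---whereas the paper simply asserts both facts.
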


\begin{proof}
We follow the construction of $\Gamma$ in Minkowski space. In the $i^{\text{th}}$ step (for $1 \leq i < n$) of subsection \ref{steps}, we additionally require the choice of $x_1$ and the $f_i$, as vectors in Minkowski space, to lie in $\Q^{(2,1)}$. Furthermore, $f_n$ and $f_0$ will be in $\Q^{(2,1)}$, since all $x_i,v_i,y_i$ will be in $\Q^{(2,1)}$. Thus $\Gamma$ will sit in $\PGL_2(\Q)$. Even with these additional requirements in the construction, there are still infinitely many choices for each $f_i$ (for $1 \leq i < n$), producing infinitely many such Fuchsian groups $\Gamma$.

For each $\Gamma$, $\left\{ y_i \right\} \subset \HFix(\Gamma)$, and let $\Delta$ be the kernel of $\Gamma \too \PGL_2(\Q)/ \PSL_2(\Q)$, which is of finite index in $\Gamma$; therefore, it follows $\left\{ y_i \right\} \subset \HFix(\Delta)$.
\end{proof}

\section{Acting on The Tree of $\SL_2(\Q_p)$}\label{SLtrees}

As in Serre's book \cite{Serre80}, let $K$ denotes a field with a discrete valuation $v$; recall that $v$ is a homomorphism of $K^\times$ onto $\mathbb{Z}$, and $\sO_v$ denotes the valuation ring of $K$, i.e. the set of $x \in K$ such that $v(x)\geq 0$ or $x=0$. Fix an element $\pi \in K$ with $v(\pi)=1$, the uniformizer. If $K = \Q$, then most $v$ subscripts are replaced with the letter $p$ for the $p$-adic valuation $v_p$.

Let $V$ be a vector space of dimension $2$ over $K$. A lattice in $V$ is any finitely generated $\sO_v$-submodule of $V$ which generates the $K$-vector space $V$; such a module is free of rank 2. The group $K^\times$ acts on the set of lattices; we call the orbit of a lattice $L$ under this action its class (at times notated $[L]=\Lambda$), and two lattices belonging to the same class are called equivalent. The set of lattice classes is denoted by $\sT_v$, which is made into a combinatorial graph with edges between $\Lambda_0$ and $\Lambda_1$ when $[L_i]= \Lambda_i$ such that $L_0 \leq L_1$ and $L_1/L_0 \iso \sO_v/\pi \sO_v$. Serre proved that $\sT_v$ is a tree. 

\begin{lem} \label{actlem1}
Let $\Gamma$ be generated by a finite number of $\rho_j$, where 
$
\rho_j = C_j 
\left( \begin{smallmatrix}
0 & -1 \\
1 & 0
\end{smallmatrix} \right)
C_j\inv
$
with $C_j \in \GL_2(K)$. 

Then the following are equivalent statements about the action of $\Gamma$ on the tree $\sT_v$:
\begin{enumerate}
\item $\Gamma \leq \Stab(\Lambda)$ for some $\Lambda \in \sT_v$;
\item 
$ \bigcap C_j \Fix_{\sT_v} \left(  
\left( \begin{smallmatrix}
0 & -1 \\
1 & 0
\end{smallmatrix} \right)  \right) \ne \emptyset $;
\item For each pair $(m,k)$, $\Fix_{\sT_v} (\rho_m \rho_k)  \ne \emptyset$.
\end{enumerate}
\end{lem}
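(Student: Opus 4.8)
The plan is to establish $(1)\Leftrightarrow(2)$ almost formally, observe that $(2)\Rightarrow(3)$ is immediate, and then concentrate on the one implication that carries content, $(3)\Rightarrow(2)$.

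First I would record the underlying geometry. Each $\rho_j$ has determinant $1$, so $\Gamma\le\SL_2(K)$, and $\SL_2(K)$ acts on $\sT_v$ without inversions (Serre, \cite{Serre80}); consequently every element of $\Gamma$ is either elliptic, with nonempty fixed subtree, or hyperbolic. The matrix $\left(\begin{smallmatrix} 0 & -1 \\ 1 & 0 \end{smallmatrix}\right)$ permutes the standard basis up to sign, hence fixes the standard lattice class; it is therefore elliptic, and conjugating gives $\Fix_{\sT_v}(\rho_j)=C_j\,\Fix_{\sT_v}\!\left(\begin{smallmatrix} 0 & -1 \\ 1 & 0 \end{smallmatrix}\right)$, a nonempty subtree (the fixed set of a tree isometry being convex, and a full subtree since there are no inversions). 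With this, $(1)\Leftrightarrow(2)$ is a restatement: the intersection appearing in $(2)$ equals $\bigcap_j\Fix_{\sT_v}(\rho_j)$, and a vertex is fixed by every generator $\rho_j$ precisely when it is fixed by the group $\Gamma$ they generate, i.e. when it is a vertex $\Lambda$ with $\Gamma\le\Stab(\Lambda)$. Likewise $(2)\Rightarrow(3)$ is immediate: a common fixed vertex $\Lambda$ of the generators is fixed by each product $\rho_m\rho_k\in\Gamma$, so $\Lambda\in\Fix_{\sT_v}(\rho_m\rho_k)$.

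The core is $(3)\Rightarrow(2)$, for which I would invoke two standard facts about trees. The first is the dichotomy for a product of two elliptic isometries $g,h$ of a tree: $gh$ is elliptic if and only if $\Fix(g)\cap\Fix(h)\ne\emptyset$; if the two fixed subtrees are disjoint, then joining them by their bridge $[p,q]$ (with $p\in\Fix(g)$, $q\in\Fix(h)$ realizing the distance) exhibits $gh$ as a hyperbolic isometry translating along an axis through $[p,q]$ by $2\,d(\Fix(g),\Fix(h))>0$. Applying this with $g=\rho_m$ and $h=\rho_k$, hypothesis $(3)$ forces $\Fix_{\sT_v}(\rho_m)\cap\Fix_{\sT_v}(\rho_k)\ne\emptyset$ for every pair $(m,k)$. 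The second fact is the Helly property for a finite family of subtrees of a tree: if they intersect pairwise then they have a common point (the three-subtree case follows from taking the median of three pairwise-intersection points, and the general case by induction). Combining, $\bigcap_j\Fix_{\sT_v}(\rho_j)\ne\emptyset$, which is exactly $(2)$.

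I expect the two-element dichotomy to be the only real obstacle. The delicate point is ruling out a fixed point of $gh$ when $\Fix(g)$ and $\Fix(h)$ are disjoint: one checks that $h$ must move the initial edge of the bridge at $q$ (otherwise an adjacent vertex would lie in $\Fix(h)$, contradicting that $q$ is the point of $\Fix(h)$ closest to $\Fix(g)$), so that $[p,q]\cup[q,h(p)]$ is a genuine geodesic of length $2\,d(p,q)$ and iterating $gh$ produces positive translation length. Since on a tree there is no parabolic behavior, positive translation length is exactly hyperbolicity. Both this dichotomy and Helly's property are classical and can be cited from \cite{Serre80}; the remaining steps are routine.
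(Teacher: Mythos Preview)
Your proof is correct and follows the same route as the paper: the equivalence $(1)\Leftrightarrow(2)$ via the conjugation identity $\Fix_{\sT_v}(\rho_j)=C_j\,\Fix_{\sT_v}\!\left(\begin{smallmatrix}0&-1\\1&0\end{smallmatrix}\right)$ is exactly what the paper records, and for $(3)\Leftrightarrow(1)$ the paper simply cites Serre \cite{Serre80}, I\S6.5, whose content is precisely the two ingredients you spell out (the elliptic--elliptic dichotomy and the Helly property for subtrees). So you have unpacked the citation rather than taken a different path.
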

\begin{proof}
($1$) $\iff$ ($2$): follows from the equality 
\[
\Fix_{\sT_v} \left(%
C_j 
\left( \begin{smallmatrix}
0 & -1 \\
1 & 0
\end{smallmatrix} \right)
C_j\inv \right)
=
C_j \Fix_{\sT_v} \left(  
\left( \begin{smallmatrix}
0 & -1 \\
1 & 0
\end{smallmatrix} \right)  \right).
\]

($3$) $\iff$ ($1$): see \cite{Serre80}, I\S 6.5.
\end{proof}

\begin{lem}\label{vablem}
Let $\Gamma$ be generated by a finite number of $\rho_j$, where 
$
\rho_j = C_j 
\left( \begin{smallmatrix}
0 & -1 \\
1 & 0
\end{smallmatrix} \right)
C_j\inv
$
with $C_j = \left( \begin{smallmatrix}
b_j & a_j \\
0 & 1
\end{smallmatrix} \right) \in \GL_2(K)$, as in ($*$). 

When $-1$ is not a square in $K$, the following are equivalent:
\begin{enumerate}
\item $\Gamma \leq \Stab(\Lambda)$ for some $\Lambda \in \sT_v$;
\item For each pair $(m,k)$, 
\[
C_k\inv C_m \in \GL_2(\sO_v)\text{;}
\]
\item For each pair $(m,k)$,
\[
v(a_m -a_k) \geq v(b_m)= v(b_k).
\]
\end{enumerate}
\end{lem}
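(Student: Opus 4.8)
The plan is to deduce all three equivalences from Lemma~\ref{actlem1} together with an explicit description of the fixed-point set on $\sT_v$ of $\omega := \left( \begin{smallmatrix} 0 & -1 \\ 1 & 0 \end{smallmatrix} \right)$. By the equivalence of (1) and (2) in Lemma~\ref{actlem1}, statement (1) here is the same as $\bigcap_j C_j \Fix_{\sT_v}(\omega) \ne \emptyset$. So the argument rests on first pinning down $\Fix_{\sT_v}(\omega)$ and then translating nonemptiness of that intersection into the matrix conditions (2) and (3).

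First I would show that $\Fix_{\sT_v}(\omega)$ is the single vertex $x_0 = [\sO_v^2]$, and this is exactly where the hypothesis that $-1$ is not a square is used. Since $\omega \sO_v^2 = \sO_v^2$, the vertex $x_0$ is fixed, and because $\omega$ has order $2$ in $\PGL_2(K)$ it acts elliptically, so $\Fix_{\sT_v}(\omega)$ is a nonempty subtree. By connectedness it is enough to check that $\omega$ fixes no neighbor of $x_0$. The neighbors of $x_0$ correspond to the lines in $\sO_v^2/\pi\sO_v^2$ over the residue field, and $\omega$ fixes such a neighbor exactly when the reduction $\bar\omega$ stabilizes the corresponding line, i.e. when $\bar\omega$ has an eigenvector over the residue field. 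The characteristic polynomial of $\omega$ is $t^2+1$, which has a root in the residue field precisely when $-1$ is a square there; since $-1$ is not a square in $K$ (and in the odd residue characteristic relevant to the application this transfers to the residue field via Hensel's lemma), this cannot happen, so no neighbor is fixed and $\Fix_{\sT_v}(\omega) = \{x_0\}$.

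Granting $\Fix_{\sT_v}(\omega) = \{x_0\}$, each $C_j \Fix_{\sT_v}(\omega)$ is the single vertex $[C_j \sO_v^2]$, so the intersection in Lemma~\ref{actlem1}(2) is nonempty exactly when all of these coincide, i.e. when $[C_m \sO_v^2] = [C_k \sO_v^2]$ for every pair $(m,k)$. Since $\Stab(x_0) = K^\times\,\GL_2(\sO_v)$, this reads $C_k\inv C_m \in K^\times\,\GL_2(\sO_v)$. Next I would exploit the explicit shape
\[
C_k\inv C_m = \left( \begin{matrix} b_m/b_k & (a_m - a_k)/b_k \\ 0 & 1 \end{matrix} \right).
\]
Because its $(2,1)$ entry is $0$ and its $(2,2)$ entry is $1$, any factorization $\lambda U$ with $U \in \GL_2(\sO_v)$ forces $U_{22} = \lambda\inv$ to be a unit (the determinant of $U$ being a unit), hence $\lambda$ is a unit; so membership in $K^\times\,\GL_2(\sO_v)$ is equivalent to membership in $\GL_2(\sO_v)$, which is (2). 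Reading off when the displayed matrix lies in $\GL_2(\sO_v)$ — integrality of its entries together with the unit-determinant condition $v(b_m/b_k)=0$ — gives exactly $v(a_m - a_k) \geq v(b_m) = v(b_k)$, which is (3).

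I expect the main obstacle to be the second paragraph: establishing that $\Fix_{\sT_v}(\omega)$ is a single vertex. Everything afterward is a routine computation with upper-triangular matrices, but the reduction-mod-$\pi$ step is the one that genuinely uses the arithmetic hypothesis on $-1$, and it must be phrased carefully so that ``$-1$ not a square in $K$'' really does rule out an $\bar\omega$-stable line over the residue field.
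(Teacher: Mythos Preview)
Your proposal is correct and follows essentially the same route as the paper: identify $\Fix_{\sT_v}(\omega)$ as the single vertex $[\sO_v^2]$ when $-1$ is not a square, invoke Lemma~\ref{actlem1}, and then read off conditions (2) and (3) from the explicit upper-triangular form of $C_k^{-1}C_m$. You supply more justification than the paper does (the paper simply asserts $\Fix_{\sT_v}(\omega)=\{[\sO_v^2]\}$ and passes directly to $C_k^{-1}C_m\in\GL_2(\sO_v)$ without your $K^\times$-scalar discussion), but the underlying argument is the same.
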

\begin{proof}
When $-1$ is not a square in $K$, we have that $ \Fix_{\sT_v} \left(  
\left( \begin{smallmatrix}
0 & -1 \\
1 & 0
\end{smallmatrix} \right)  \right) = \left\{ [\sO_v^2] \right\}$, where $\sO_v^2$ is the standard lattice.

Let $\rho_m$ and $\rho_k$ be two generators of $\Gamma$.  Then $\Fix_{\sT_v} (\rho_m \rho_k)  \ne \emptyset$ if and only if 
\[
 C_m \Fix_{\sT_v} \left(  
\left( \begin{smallmatrix}
0 & -1 \\
1 & 0
\end{smallmatrix} \right)  \right) 
\bigcap
 C_k \Fix_{\sT_v} \left(  
\left( \begin{smallmatrix}
0 & -1 \\
1 & 0
\end{smallmatrix} \right)  \right) \ne \emptyset, 
\]
and that holds only when $C_k\inv C_m \in \GL_2(\sO_v)$. By Lemma~\ref{actlem1}, (1) and (2) are equivalent.

To complete the proof note that $ C_k\inv C_m = \left( \begin{smallmatrix}
\frac{b_m}{b_k} & \frac{a_m-a_k}{b_k} \\
0 & 1
\end{smallmatrix} \right) \in \GL_2(\sO_v)$ if and only if 
\[
v\left( \frac{b_m}{b_k} \right) =0 \qquad \text{ and } \qquad v\left( \frac{a_m-a_k}{b_k} \right) \geq 0.
\]~\end{proof}

\begin{prop}\label{noLstab}
Let $v_0 < y_1 < y_2 < \dots < y_{n-1} \ne v_\infty$ in $L^+_\Q$ ($n > 2$), and let a prime $p \equiv  3 \mod 4$. Then there are non-cocompact Fuchsian groups $\Delta$ of finite covolume sitting in $\PSL_2(\Q)$ with $\left\{ y_1, \dots, y_{n-1} \right\} \subset \HFix(\Delta)$, each of which stabilize no vertex in the tree $\sT_{p}$.
\end{prop}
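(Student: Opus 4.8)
The plan is to run the rational construction once more and then spend the remaining freedom on the single prime $p$. By Proposition \ref{DinpslQ} I may build $\Gamma \leq \PGL_2(\Q)$ of signature $(0:2,\dots,2;1;0)$ with generators $\rho_j = C_j \left(\begin{smallmatrix} 0 & -1 \\ 1 & 0\end{smallmatrix}\right) C_j\inv$, $C_j = \left(\begin{smallmatrix} b_j & a_j \\ 0 & 1\end{smallmatrix}\right)$, where $a_j, b_j^2 \in \Q$ are the quantities of the Example, and with $\{y_i\} \subset \HFix(\Gamma)$; the group $\Delta = \Gamma \cap \PSL_2(\Q)$ is the finite-index subgroup of Proposition \ref{DinpslQ}, and it already satisfies every conclusion except the statement about $\sT_p$. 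Since $p \equiv 3 \pmod 4$, $-1$ is not a square in $\Q_p$, so Lemmas \ref{actlem1} and \ref{vablem} apply verbatim with $K = \Q_p$, $v = v_p$. The whole problem is therefore to spend the freedom left in choosing the $f_i$ so as to violate the vertex-stabilizing criterion, and then to push the resulting tree-hyperbolic element down into $\Delta$.

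First I would reduce to a single pair of generators. By Lemma \ref{actlem1}(3) the group fixes no vertex of $\sT_p$ as soon as one product $\rho_m\rho_k$ fixes no vertex, so it suffices to make $\rho_1\rho_2$ loxodromic on $\sT_p$. This pair is the natural one: the hypothesis $n>2$ guarantees that $f_1$ and $f_2$ are both freely chosen (the fixed points $f_n$ and $f_0$ are forced), both lie on the rational geodesic $\overline{x_1 y_1}$, and $\rho_1\rho_2$ is exactly the hyperbolic element with axis $\overline{x_1 y_1}$ constructed in subsection \ref{steps}, for which $y_1 \in \HFix$. Choosing $f_1, f_2 \in \Q^{2,1}$ with $\abs{\inp{f_i,f_i}} \in (\Q_p^\times)^2$ puts $b_1, b_2 \in \Q_p$, so Lemma \ref{vablem}(3) becomes available: $\rho_1\rho_2$ fixes no vertex precisely when the condition $v_p(a_1 - a_2) \geq v_p(b_1) = v_p(b_2)$ fails. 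I would force failure in the cleanest way, by arranging $v_p(b_1) \neq v_p(b_2)$, i.e. giving $\abs{\inp{f_1,f_1}}$ and $\abs{\inp{f_2,f_2}}$ distinct (even) $p$-adic valuations; then $v_p(\det \rho_1\rho_2) = 2v_p(b_1)+2v_p(b_2)$ is even, so $\rho_1\rho_2$ is not a tree inversion and ``fixes no vertex'' upgrades to genuinely loxodromic.

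The main obstacle is realizing these $p$-adic valuation constraints within the construction, that is, showing the freedom is genuinely there. The coordinates of $f_i$ range over the rational points of an open subarc of $\overline{x_1 y_1}$; on this arc $a_i = -\inp{f_i,e_1}/\inp{f_i,e_2+e_0}$ and $b_i^2 = \abs{\inp{f_i,f_i}}/\inp{f_i,e_2+e_0}^2$ are nonconstant rational functions of a single parameter. Lemma \ref{chooselem} only constrains $f_i$ to a real subinterval (it fixes the cyclic order of the vertices), while the $p$-adic size of $\abs{\inp{f_i,f_i}}$ is unconstrained by that order. I would therefore invoke weak approximation: $\Q$ is dense in $\R \times \Q_p$, so I can pick rational $f_1, f_2$ that simultaneously lie in the prescribed real subintervals and are $p$-adically close to points where $\abs{\inp{f_i,f_i}}$ has the desired square class and valuation. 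The delicate points to check are that $\abs{\inp{f_i,f_i}}$ is nonconstant along the arc (so its values are not trapped in one valuation) and that the simultaneous real and $p$-adic conditions stay compatible with $f_i$ being timelike and in $\Q^{2,1}$; granting this, infinitely many admissible $(f_1,f_2)$ remain and such a $\Gamma$ exists.

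Finally I would transfer to $\Delta$. The element $(\rho_1\rho_2)^2$ has determinant $(b_1^2 b_2^2)^2 \in (\Q^\times)^2$, hence lies in $\Gamma \cap \PSL_2(\Q) = \Delta$; and a nonzero power of a loxodromic tree isometry is again loxodromic, so $(\rho_1\rho_2)^2$ translates along an axis and fixes no vertex of $\sT_p$. Consequently $\Delta$ stabilizes no vertex of $\sT_p$, while the remaining assertions (non-cocompact, finite covolume, $\Delta \leq \PSL_2(\Q)$, and $\{y_i\} \subset \HFix(\Delta)$) are inherited directly from Proposition \ref{DinpslQ}. I expect the only real work to be the approximation step of the third paragraph; everything else is bookkeeping layered on top of the two tree lemmas.
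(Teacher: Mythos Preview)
Your proposal is correct and shares the paper's core idea---violate condition~(3) of Lemma~\ref{vablem} for the pair $(\rho_1,\rho_2)$ by forcing $v_p(b_1)\ne v_p(b_2)$---but the execution differs. The paper avoids weak approximation entirely: it simply places $\abs{\inp{f_1,f_1}}$ in the rational square class $p(\Q^\times)^2$ and each $\abs{\inp{f_i,f_i}}$ for $i>1$ in a class $n_i(\Q^\times)^2$ with $p\nmid n_i$. This is immediate because $\Span_\Q\{x_{i-1},y_{i-1}\}$ is an isotropic plane over~$\Q$ and hence represents every rational value; no $p$-adic density argument is needed. The effect is that $v_p(b_1^2)$ is odd while $v_p(b_i^2)$ is even, so the valuations differ for parity reasons alone. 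Your route instead insists on $b_1,b_2\in\Q_p$ (so that Lemma~\ref{vablem} applies literally) and separates their valuations within the even residue class, and this is what forces you into the approximation step you flag as the main work. What your version buys in return is a cleaner passage to~$\Delta$: since $v_p(\det\rho_1\rho_2)=2v_p(b_1)+2v_p(b_2)$ is even, $\rho_1\rho_2$ is type-preserving and hence genuinely loxodromic on~$\sT_p$, so $(\rho_1\rho_2)^2\in\Delta$ is visibly loxodromic as well. In the paper's setup $v_p(\det\rho_1\rho_2)$ is odd, so $\rho_1\rho_2$ could a priori be an edge inversion, and the descent to~$\Delta$ is asserted without this check.
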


\begin{proof}
Consider the construction of $\Gamma$ in subsection \ref{steps} in Minkowski space; below we will describe additional requirements for choosing $x_1$ and the $f_i$.   

In the $1^{\text{st}}$ step, choose $x_1 \in L^+_\Q $ so $v_0 < x_1 < y_1$.  When choosing $f_1$, additionally require that $f_1 \in \Span_\Q \left\{ x_1,y_1 \right\}$ and $\abs{\inp{f_1,f_1}}$ is in the rational square class $p(\Q^\times)^2=\left\{ p \alpha^2: \alpha \in \Q \right\}$, which is possible because $\Span_\Q \left\{ x_1,y_1 \right\}$ is isotropic. In the $i^{\text{th}}$ step (for $1 < i < n$), one specifies a rational square class, say $n_i (\Q^\times)^2 \in \Q^\times / (\Q^\times)^2$ ($n_i$ a square free integer) such that $p \not | n_i$. When choosing $f_i$, additionally require that $f_i \in \Span_\Q \left\{ x_{i-1},y_{i-1} \right\}$ and $\abs{\inp{f_i,f_i}} \in n_i(\Q^\times)^2$, which is also possible because $\Span_\Q \left\{ x_{i-1} , y_{i-1} \right\}$ is isotropic. 

Now note (as in the example in subsection \ref{V21U2models}) that $\rho_{f_i}$ as an element of $\PGL_2(\Q)$ is given by the matrix 
\[
\rho_{f_i}  = 
\left( \begin{matrix}
b_i & a_i \\
0 & 1
\end{matrix} \right)
\left( \begin{matrix}
0 & -1 \\
1 & 0
\end{matrix} \right)
\left( \begin{matrix}
b_i & a_i \\
0 & 1
\end{matrix} \right) \inv,
\]
where $b_i = \frac{\sqrt{\abs{\inp{f_i,f_i}}}}{ \inp{ f_i , e_2 + e_0}}$ and $a_i = - \frac{ \inp{f_i , e_1}}{ \inp{f_i, e_2+e_0} }$. 

Since $b_1$ has a factor $\sqrt{p}$, and each $b_i$ ($1 < i < n$) does not,
\[
v_{p} (b_i) \ne  v_{p}(b_1).
\]
Moreover, $-1$ is not a square in $\Q_{p}$ (since $p \equiv  3 \mod 4$). By Lemma~\ref{vablem}, $\Gamma$ stabilizes no vertex in $\sT_{p}$, and by construction $\left\{ y_i \right\} \subset \HFix(\Gamma)$. Now let $\Delta$ be the kernel of $\Gamma \too \PGL_2(\Q)/ \PSL_2(\Q)$, which is of finite index in $\Gamma$; then $\Delta$ also stabilizes no vertex in $\sT_{p}$ and $\left\{ y_i \right\} \subset \HFix(\Delta)$.
\end{proof}

\begin{rem*}
For each $\Delta$ constructed in the proof of Proposition \ref{DinpslQ}, there is an integer $m$ such that $\Delta$ stabilizes a vertex of $\sT_{q}$ for all primes $q> m$. To see this, choose $m$ large enough so that $m$ is greater than all the denominators of the entries of a matrix representing $\rho_{f_i}$, for each $i$, as an element of $\PGL_2(\Q)$
\end{rem*}

\section{Proof of the Theorem}\label{proofT}

\begin{thm*} \label{mainthm}
Let $Y$ be a finite set of rational points in the boundary of the hyperbolic plane.  Then there are infinitely many noncommensurable non-cocompact Fuchsian groups $\Delta$ of finite covolume sitting in $\mathrm{PSL}_2(\mathbb{Q})$ so that $Y \subset \HFix(\Delta)$.
\end{thm*}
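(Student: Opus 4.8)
The plan is to pull the infinitely many noncommensurable groups out of the family produced by Proposition~\ref{noLstab}, using the presence or absence of a global fixed vertex on the trees $\sT_p$ as a commensurability invariant. First I would dispose of the degenerate cases: since the conclusion only asks that $Y\subseteq\HFix(\Delta)$, one may freely enlarge $Y$ by extra rational boundary points and fix auxiliary rational endpoints $v_0,v_\infty$ so that the hypotheses $v_0<y_1<\cdots<y_{n-1}\neq v_\infty$ in $L^+_\Q$ with $n>2$ of Proposition~\ref{noLstab} are satisfied. For a finite covolume $\Delta\leq\PSL_2(\Q)$ define its bad set
\[
B(\Delta)=\{\,p\equiv 3 \mod 4 \text{ a prime} : \Delta \text{ stabilizes no vertex of } \sT_p\,\}.
\]

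The central step is to show that $B$ is a commensurability invariant. Every element of $\PSL_2(\Q)$ has a determinant-one lift, hence acts on each $\sT_\ell$ without inversion, so a subgroup of $\PSL_2(\Q)$ stabilizes a vertex of $\sT_\ell$ exactly when it has a bounded orbit there. This gives invariance under finite-index sub- and supergroups at once: if $\Delta'\leq\Delta$ has finite index and $\Delta'$ fixes a vertex $x$, then $\Stab_\Delta(x)\supseteq\Delta'$ has finite index in $\Delta$, the orbit $\Delta\cdot x$ is finite, and (no inversions) $\Delta$ fixes a vertex. The remaining, genuinely delicate, ingredient is invariance under the conjugation permitted in the definition of commensurability: if $g\in\PSL_2(\R)$ conjugates a finite-index subgroup of $\Delta_1$ onto one of $\Delta_2$, I must see that $g$ may be taken in $\PGL_2(\Q)$, so that it acts on $\sT_\ell$ and sends vertices to vertices, giving $B(\Delta_1)=B(\Delta_2)$.

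I expect this rationality of the conjugator to be the main obstacle, and I would settle it by a Zariski-density argument. Choosing generators $h_i$ of a nonelementary, hence irreducible, finite-index subgroup, set $k_i=gh_ig\inv\in\PGL_2(\Q)$; the homogeneous linear conditions $Xh_i=k_iX$ cut out a $\Q$-defined subspace $W\subseteq M_2(\R)$. Irreducibility of $\langle h_i\rangle$ forces its commutant in $M_2$ to be scalar, so any invertible $X\in W$ satisfies $g\inv X\in\R^\times$; a genericity argument on $\det(g+tX)$ then shows $W=\R\,g$ is one-dimensional. Being $\Q$-rational and one-dimensional, $W$ is spanned by a rational matrix, whence $[g]\in\PGL_2(\Q)$, as needed.

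With the invariant established I would finish by a pigeonhole construction. For each prime $p\equiv 3\mod 4$, Proposition~\ref{noLstab} supplies a group $\Delta$ with $Y\subset\HFix(\Delta)$ and $p\in B(\Delta)$; by the Remark following it (take $m$ larger than every denominator occurring in the matrices $\rho_{f_i}$, so that for $q>m$ Lemma~\ref{vablem} yields a common fixed vertex), $\Delta$ stabilizes a vertex of $\sT_q$ for all $q>m$, so $B(\Delta)$ is finite. I then build the family inductively: having chosen $\Delta_1,\dots,\Delta_{j-1}$ with finite bad sets, pick a prime $p_j\equiv 3\mod 4$ exceeding $\max\bigl(B(\Delta_1)\cup\cdots\cup B(\Delta_{j-1})\bigr)$ and let $\Delta_j$ be the corresponding group, so that $p_j\in B(\Delta_j)$ while $p_j\notin B(\Delta_i)$ for $i<j$. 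The sets $B(\Delta_j)$ are then pairwise distinct, and since $B$ is a commensurability invariant the groups $\Delta_j$ are pairwise noncommensurable, producing the required infinite family.
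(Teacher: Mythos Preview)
Your proposal is correct and follows essentially the same route as the paper: use Proposition~\ref{noLstab} together with the remark after it to produce groups distinguished by which trees $\sT_p$ carry a global fixed vertex, and read off noncommensurability from this invariant; the paper phrases the endgame as a contradiction while you phrase it as an inductive construction, and the paper's own remark after the theorem notes that the inductive version works just as well. The one place you go further than the paper is in justifying that the conjugator in the commensurability relation may be taken in $\PGL_2(\Q)$---the paper simply asserts that the fixed-vertex property ``is invariant under conjugation,'' whereas you supply the standard commutant/Schur argument to force rationality of $g$; this is a genuine detail the paper elides, and your treatment of it is sound.
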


\begin{proof}
We can let $Y$ be a finite set of two or more rational points in the boundary of the hyperbolic plane (just add points if fewer than $2$ are given).  Let $Y= \left\{ y_i \right\}$, so that $v_0 < y_1 < y_2 < \dots < y_{n-1} \ne v_\infty$ in $L^+_\Q$ ($n > 2$). Now let the family $\left\{ \Delta \right\}$ be the set of non-cocompact Fuchsian groups of finite covolume sitting in $\PSL_2(\Q)$ such that $\left\{ y_1, \dots, y_{n-1} \right\} \subset \HFix(\Delta)$, which are constructed in the proof of Proposition \ref{DinpslQ}. 

Assume for the purpose of contradiction that there is a finite number $k$ of commensurability classes in family $\left\{ \Delta \right\}$. Let $\left\{ \Delta_1, \dots, \Delta_k \right\}$ be distinct representatives from the $k$ commensurability classes.

From the remark just after Proposition \ref{noLstab}, there is an integer $m$ such that each $\Delta_j $ ($1 \leq j \leq k$) stabilizes a vertex in $\sT_{q}$ for all $q> m$.   By Dirichlet's theorem on arithmetic progressions, we can choose a prime $p > m$ and $p \equiv  3 \mod 4$.  By Proposition \ref{noLstab}, there is $\Delta_{k+1} \in \left\{ \Delta \right\}$ with $\left\{ y_i \right\} \subset \HFix(\Delta_{k+1})$ and so that $\Delta_{k+1}$ does not stabilize any vertex of $\sT_{p}$.  Therefore, each $\Delta_j$ ($1 \leq j \leq k$) stabilizes a vertex in $\sT_{p}$ but $\Delta_{k+1}$ does not. For subgroups of $\PSL_2(\Q)$, the presence or absence of fixed points in $\sT_{p}$ descends to finite index subgroups, and is invariant under conjugation. Thus $\Delta_{k+1}$ is not commensurable with any of the $\Delta_{j}$ ($1 \leq j \leq k$), which contradicts the assumption there are a finite number of commensurability classes in family $\left\{ \Delta \right\}$.
\end{proof}

\begin{rem*}
By using Proposition \ref{noLstab} and the remark just after it, one can inductively constructs an infinite family where the members lie in different commensurability classes. 
\end{rem*}

As mentioned in the introduction, a boundary point cannot both be a cusp and a hyperbolic fixed point, for Fuchsian groups; thus a direct corollary of the theorem is

\begin{cor*}
Let $Y$ be finite set of rationals. Then there are infinitely many noncommensurable non-cocompact Fuchsian groups of finite covolume sitting in $\mathrm{PSL}_2(\mathbb{Q})$ whose cusp set is properly contained in $\left( \Q \setminus Y  \right) \cup \left\{ \infty \right\}$.
\end{cor*}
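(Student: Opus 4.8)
The plan is to read off the corollary from the main theorem together with the elementary fact, recalled in the introduction, that a single boundary point of $H^2$ cannot simultaneously be a cusp and a hyperbolic fixed point of a Fuchsian group. First I would apply the theorem to the given finite set $Y$ of rationals, obtaining infinitely many noncommensurable non-cocompact Fuchsian groups $\Delta$ of finite covolume in $\PSL_2(\Q)$ with $Y \subset \HFix(\Delta)$; the noncommensurability, non-cocompactness, and finiteness of covolume are then inherited directly, so it remains only to control the cusp set of each such $\Delta$.

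Next I would establish the containment. Since $\Delta \leq \PSL_2(\Q)$, every parabolic element of $\Delta$ is represented by a rational matrix, and the fixed point of a parabolic $\left( \begin{smallmatrix} a & b \\ c & d \end{smallmatrix} \right)$ is either $\infty$ (when $c = 0$) or the rational number $\frac{a-d}{2c}$; hence the cusp set of $\Delta$ is contained in $\Q \cup \{\infty\}$, exactly as observed in the introduction. Because $Y \subset \HFix(\Delta)$ and no boundary point can be both a cusp and a hyperbolic fixed point, the cusp set must miss every point of $Y$, and so it lies in $(\Q \cup \{\infty\}) \setminus Y = (\Q \setminus Y) \cup \{\infty\}$.

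The only point requiring genuine care is the \emph{properness} of this containment, and this is where I would invoke the specifics of the construction rather than the bare statement of the theorem. The groups delivered by the theorem are precisely those built in subsection \ref{steps} and Proposition \ref{DinpslQ}, for which each element $\rho_i \rho_{i+1}$ (with $1 \leq i < n$) is hyperbolic with axis $\overline{x_i y_i}$, so every $x_i$ is a hyperbolic fixed point of $\Delta$ and hence not a cusp. By the construction each $x_i$ lies in $L^+_\Q$, so it is a rational boundary point, and since $v_{i-1} < x_i < y_{i-1}$ it is distinct from all the $y_j$ and thus from $Y$; hence $x_i \in (\Q \setminus Y) \cup \{\infty\}$ while $x_i$ is not a cusp. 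This single witness forces the cusp set to be a proper subset of $(\Q \setminus Y) \cup \{\infty\}$, completing the argument. I expect this properness step to be the main (if still mild) obstacle, since everything else follows formally from the theorem and the cusp/hyperbolic-fixed-point dichotomy, whereas properness is the one place that genuinely uses the extra rational hyperbolic fixed points $x_i$ produced by the explicit fundamental-domain construction.
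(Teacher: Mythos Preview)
Your argument is correct and follows the same route as the paper: deduce the containment of the cusp set in $(\Q\setminus Y)\cup\{\infty\}$ directly from the theorem together with the cusp/hyperbolic-fixed-point dichotomy. The paper in fact says nothing beyond ``a direct corollary of the theorem,'' so your treatment of the \emph{proper} containment---exhibiting some $x_i$ from the construction as a rational hyperbolic fixed point lying outside $Y$---is more careful than what the paper spells out; one minor quibble is that the inequality $v_{i-1}<x_i<y_{i-1}$ you cite only makes sense for $i\geq 2$, but since a single witness suffices you can simply take $x_1$ (with $v_0<x_1<y_1$) and be done.
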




\begin{thebibliography}{9}
\bibitem{Beardon83} 
Beardon, Alan F., \textit{The geometry of discrete groups}. Springer-Verlag, 1983.

\bibitem{LongReid02}
Long, D. D. and Reid, A. W., \textit{Pseudomodular surfaces}. J. Reine Angew. Math. \textbf{552} (2002), 77--100.

\bibitem{Serre80} 
Serre, Jean-Pierre, \textit{Trees}. Springer-Verlag, 1980.

\bibitem{Vinberg71} 
Vinberg, {\`E}. B., \textit{Rings of definition of dense subgroups of semisimple linear groups}. Izv. Akad. Nauk SSSR Ser. Mat. \textbf{35} (1971), 45--55.

\bibitem{VinbergPP} 
Vinberg, {\`E}. B., \textit{Some Examples of Fuchsian Groups Sitting in $\mathrm{SL}_2(\Q)$}. Preprint; 
\mbox{\url{http://www.math.uni-bielefeld.de/sfb701/files/preprints/sfb12011.pdf}}

\end{thebibliography}
\end{document}